\pdfoutput=1
\RequirePackage{ifpdf}
\ifpdf 
\documentclass[pdftex]{sigma}
\else
\documentclass{sigma}
\fi

\usepackage[all]{xy}

\numberwithin{equation}{section}

\newtheorem{Theorem}{Theorem}[section]
\newtheorem{Lemma}[Theorem]{Lemma}
\newtheorem{Proposition}[Theorem]{Proposition}
\theoremstyle{definition}
\newtheorem{Definition}[Theorem]{Definition}

\newtheorem{Remark}[Theorem]{Remark}

\begin{document}
\allowdisplaybreaks

\newcommand{\arXivNumber}{1901.01566}

\renewcommand{\thefootnote}{}

\renewcommand{\PaperNumber}{034}

\FirstPageHeading

\ShortArticleName{Jacobian Conjecture via Differential Galois Theory}

\ArticleName{Jacobian Conjecture via Differential Galois Theory\footnote{This paper is a~contribution to the Special Issue on Algebraic Methods in Dynamical Systems. The full collection is available at \href{https://www.emis.de/journals/SIGMA/AMDS2018.html}{https://www.emis.de/journals/SIGMA/AMDS2018.html}}}

\Author{El\.zbieta ADAMUS~$^\dag$, Teresa CRESPO~$^\ddag$ and Zbigniew HAJTO~$^\S$}

\AuthorNameForHeading{E.~Adamus, T.~Crespo and Z.~Hajto}

\Address{$^\dag$~Faculty of Applied Mathematics, AGH University of Science and Technology,\\
\hphantom{$^\dag$}~al.~Mickiewicza 30, 30-059 Krak\'ow, Poland}
\EmailD{\href{mailto:esowa@agh.edu.pl}{esowa@agh.edu.pl}}

\Address{$^\ddag$~Departament de Matem\`atiques i Inform\`atica, Universitat de Barcelona,\\
\hphantom{$^\ddag$}~Gran Via de les Corts Catalanes 585, 08007 Barcelona, Spain}
\EmailD{\href{mailto:teresa.crespo@ub.edu}{teresa.crespo@ub.edu}}

\Address{$^\S$~Faculty of Mathematics and Computer Science, Jagiellonian University,\\
\hphantom{$^\S$}~ul.~{\L}ojasiewicza 6, 30-348 Krak\'ow, Poland}
\EmailD{\href{mailto:zbigniew.hajto@uj.edu.pl}{zbigniew.hajto@uj.edu.pl}}

\ArticleDates{Received January 23, 2019, in final form May 01, 2019; Published online May 03, 2019}

\Abstract{We prove that a polynomial map is invertible if and only if some associated differential ring homomorphism is bijective. To this end, we use a theorem of Crespo and Hajto linking the invertibility of polynomial maps with Picard--Vessiot extensions of partial differential fields, the theory of strongly normal extensions as presented by Kovacic and the characterization of Picard--Vessiot extensions in terms of tensor products given by Levelt.}

\Keywords{polynomial automorphisms; Jacobian problem; strongly normal extensions}

\Classification{14R10; 14R15; 13N15; 12F10}

\renewcommand{\thefootnote}{\arabic{footnote}}
\setcounter{footnote}{0}

\section{Introduction}

The Jacobian conjecture originates from the problem posed by Keller in~\cite{Ke}. It is the 16th problem in Stephen Smale's list of mathematical problems for the twenty-first century (cf.~\cite{Sma}). Let us recall the precise statement of the Jacobian conjecture.

Let $C$ denote an algebraically closed field of characteristic zero. Let $n>0$ be a fixed integer and let $F=(F_1, \ldots, F_n)\colon C^n \rightarrow C^n$ be a polynomial map, i.e., $F_i \in C[X]$, for $i=1,\ldots, n$, where $X=(X_1, \ldots, X_n)$. We consider the Jacobian matrix $J_F = \big[\frac{\partial F_i}{\partial X_j}\big]_{1 \leq i,j \leq n}$. The Jacobian conjecture states that if $\det(J_F)$ is a non-zero constant, then $F$ has an inverse map, which is also polynomial.

In spite of many different approaches involving various mathematical tools the question is still open. In 1980 S.S.-S.~Wang \cite{W} proved the Jacobian conjecture for quadratic maps. The same year H.~Bass, E.~Connell and D.~Wright in~\cite{Bass} and A.V.~Yagzhev in~\cite{Y} independently reduced the Jacobian problem to maps of degree 3 at cost of enlarging the number of variables. In~\cite{Bass} an interesting differential approach to the Jacobian problem due to Wright is also presented. An account of the research on the Jacobian conjecture may be found in \cite{E} and in the survey~\cite{E2}. In the recent years some new achievements have been reached such as the negative answer to the long standing dependence problem given by M.~de Bondt~\cite{Bo} and results by several authors on classification of special types of Keller maps. Recently, in~\cite{ABCH} and~\cite{ABCH2} we have considered the class of Pascal finite automorphisms. On the other hand the conjecture holds under strong additional assumptions. As an example let us recall the result of L.A.~Campbell (see~\cite{C}), which states that the thesis holds if $C(X)/C(F)$ is a Galois extension.

In a previous paper using Picard--Vessiot theory of partial differential fields Crespo and Hajto obtained a differential version of the classical theorem of Campbell. Let us consider the field~$C(X)$ with the differential structure given by the Nambu derivations (see Section~\ref{section3}). Then Crespo and Hajto proved that if the differential extension~$C(X)/C$ is Picard--Vessiot, then~$F$ is invertible (cf.~\cite[Theorem~2]{CH}). A computational approach to this result using wronskians has been given in~\cite{ABH}. The use of wronskians makes the calculations longer but allows application to the more general context of dominant polynomial maps without the assumption of the Jacobian determinant beeing a non-zero constant to check the Galois character of the associated field extension.

In \cite[Theorem~1]{L}, Levelt proved a necessary condition for a differential field extension $K/k$ to be Picard--Vessiot in terms of tensor products.

In this paper, we prove a partial converse of Levelt's theorem. To this end, we use the theory of strongly normal extensions as presented by Kovacic in~\cite{Kov2} and~\cite{Kov1}. By using the converse of Levelt's theorem together with the above mentioned result by Crespo and Hajto we obtain that a polynomial map is invertible if and only if some associated differential ring homomorphism is bijective. This provides a criterion to check the invertibility of polynomial maps.

\section{Inverting a theorem of A.H.M.~Levelt}\label{section2}

In this section we prove that Levelt's necessary condition for a differential field extension $K/k$ to be Picard--Vessiot is sufficient for $K/k$ to be strongly normal in the case in which the base field is a field of constants. In the next section we shall apply this result to the extension $C(X)/C$ endowed with the Nambu derivations associated to a polynomial map $F$ in order to obtain an equivalent condition to the invertibility of~$F$.

Let $C$ be an algebraically closed field of characteristic zero. Let $R$ be an integral domain containing $C$ with the differential ring structure given by a finite set $\Delta$ of commuting derivations. Let $K$ be the field of fractions of $R$ with the differential structure given by extending the derivations in $\Delta$ in the standard way. Let us assume that $C$ is the field of constants of $K$ and that $K$ is differentially finitely generated over $C$. Any derivation $\delta \in \Delta$ extends to $K\otimes_C R$ by the formula $\delta (x\otimes y)=\delta(x) \otimes y + x \otimes \delta(y)$ on elementary tensors and by linearity to the whole tensor product. Let $E$ denote the differential subring of constants in $K \otimes_C R$. We consider the differential ring homomorphism
\begin{gather*} \phi \colon \ K \otimes_C E \rightarrow K \otimes_C R, \qquad \phi (a \otimes d)=(a \otimes 1)d .\end{gather*}

Our aim is to prove that if $\phi$ is an isomorphism, then the extension $K/C$ is strongly normal. We shall prove first that under the above hypothesis, $\phi$ is injective. To this end, we need the following lemma.

\begin{Lemma} \label{ideal} The map
 \begin{gather*} h\colon \ E \rightarrow K \otimes_C E, \qquad h(d ) = 1 \otimes d,\end{gather*}
 induces a bijection between the set $\mathcal{I}(E)$ of ideals of $E$ and the set $\mathcal{I}(K \otimes_C E)$ of differential ideals of $K \otimes_C E$.
\end{Lemma}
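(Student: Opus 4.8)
The plan is to exhibit an explicit inverse to the induced map and to reduce everything to one classical ``minimal length'' device. First note that, since every element of $E$ is a constant of $K\otimes_C R$, the derivations $\delta\in\Delta$ act on $K\otimes_C E$ through the first factor only, by $\delta(a\otimes d)=\delta(a)\otimes d$; moreover $C=C\otimes 1\subseteq E$, so $K\otimes_C E$ is a differential $C$-algebra and $h$ is injective because $K$ is a free, hence faithfully flat, $C$-module. I will show that the map sending an ideal $I$ of $E$ to the differential ideal it generates,
\begin{gather*}
\Phi\colon\ \mathcal I(E)\longrightarrow\mathcal I(K\otimes_C E),\qquad I\longmapsto (K\otimes_C E)\,h(I)=K\otimes_C I,
\end{gather*}
is a bijection, with inverse
\begin{gather*}
\Psi\colon\ \mathcal I(K\otimes_C E)\longrightarrow\mathcal I(E),\qquad J\longmapsto h^{-1}(J)=\{d\in E:1\otimes d\in J\}.
\end{gather*}
That $\Phi(I)$ equals $K\otimes_C I$ and is a differential ideal, and that $\Psi(J)$ is an ideal of $E$, are immediate; for the first equality one chooses a $C$-basis of $E$ extending a $C$-basis of $I$.

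The essential point will be the claim that \emph{every non-zero differential ideal $J$ of $K\otimes_C E$ contains an element $1\otimes d$ with $0\neq d\in E$}. To prove this I would fix a $C$-basis $(e_\lambda)_{\lambda\in\Lambda}$ of $E$, so that each element of $K\otimes_C E$ has a unique expression $\sum_\lambda a_\lambda\otimes e_\lambda$ with finitely many non-zero $a_\lambda\in K$, and call the number of non-zero $a_\lambda$ its length. Choose $w\in J$, $w\neq 0$, of minimal length, say $w=a_{\lambda_1}\otimes e_{\lambda_1}+\dots+a_{\lambda_m}\otimes e_{\lambda_m}$ with all $a_{\lambda_j}\neq 0$. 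Replacing $w$ by $(a_{\lambda_1}^{-1}\otimes 1)\,w\in J$ we may assume $a_{\lambda_1}=1$. Then for every $\delta\in\Delta$ the element $\delta w=\sum_{j=2}^{m}\delta(a_{\lambda_j})\otimes e_{\lambda_j}$ lies in $J$ and has length $\leq m-1$, so minimality forces $\delta w=0$; hence $\delta(a_{\lambda_j})=0$ for all $j$ and all $\delta$, i.e., $a_{\lambda_j}\in C$ because $C$ is the field of constants of $K$. Therefore $w=1\otimes d$ with $d=\sum_j a_{\lambda_j}e_{\lambda_j}\in E\setminus\{0\}$. This is the one non-formal step, and I expect it to be the main obstacle; the presence of several commuting derivations causes no difficulty, since their common kernel in $K$ is exactly $C$, and note that the argument uses only $C=K^\Delta$ and not that $C$ is algebraically closed.

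Finally I would deduce that $\Phi$ and $\Psi$ are mutually inverse. For $I\in\mathcal I(E)$, writing $K\otimes_C I=\bigoplus_{\lambda}K\otimes e_\lambda$ with respect to a $C$-basis of $E$ extending one of $I$ gives $h(E)\cap(K\otimes_C I)=h(I)$, hence $\Psi\Phi(I)=I$. Conversely, let $J\in\mathcal I(K\otimes_C E)$ and put $I=\Psi(J)$; then $\Phi\Psi(J)=K\otimes_C I\subseteq J$ trivially. If this inclusion were strict, the image $\bar J$ of $J$ in $(K\otimes_C E)/(K\otimes_C I)\cong K\otimes_C(E/I)$ would be a non-zero differential ideal, so by the claim it would contain $1\otimes\bar d$ with $0\neq\bar d\in E/I$; lifting $\bar d$ to $d\in E$ gives $1\otimes d\in J+K\otimes_C I=J$, whence $d\in I$ and $\bar d=0$, a contradiction. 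Thus $J=K\otimes_C I=\Phi\Psi(J)$, which completes the argument.
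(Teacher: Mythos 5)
Your proof is correct and rests on the same core device as the paper's: choose an element of a differential ideal having the minimal number of nonzero terms in a $K$-basis $1\otimes M$ of $K\otimes_C E$, and use that the derivations act only through the $K$-factor with $K^{\Delta}=C$ to force the coefficients to be (proportional to) constants, hence the element to be of the form $1\otimes d$. The differences are only organizational --- you normalize the leading coefficient to $1$ and finish by passing to the quotient $K\otimes_C(E/I)$, whereas the paper takes a minimal element of $\mathfrak{b}\setminus\mathfrak{b}^{ce}$ and uses the combination $\delta r_{\mu_0}a-r_{\mu_0}\delta a$ --- and your remark that algebraic closedness of $C$ is not needed here is likewise consistent with the paper's argument.
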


\begin{proof}To an ideal $\mathfrak{a}$ of $E$ we associate its extension $\mathfrak{a}^e=K \otimes_C \mathfrak{a}$ and to an ideal $\mathfrak{b}$ of $K \otimes_C E$ its contraction $\mathfrak{b}^c=h^{-1}(\mathfrak{b})$.

The inclusion $\mathfrak{a}^{ec} \supset \mathfrak{a}$ is well known. Let us prove $\mathfrak{a}^{ec} \subset \mathfrak{a}$. We take a basis $\Lambda$ of the $C$-vector space $\mathfrak{a}$ and extend it to a basis $M$ of the $C$-vector space $E$. Then $1 \otimes M$ is a basis of the $K$-vector space $K \otimes_C E$. Let $d$ be any element in $\mathfrak{a}^{ec}$. Then $1 \otimes d \in \mathfrak{a}^{ece}=\mathfrak{a}^{e}$, so
\begin{gather*} 1 \otimes d = \sum_{\lambda \in \Lambda} r_{\lambda} \otimes \lambda, \qquad r _{\lambda} \in K.\end{gather*}
On the other hand $d=\sum\limits_{\mu \in M} c_{\mu} \mu$, $c_{\mu} \in C$, so
\begin{gather*}1 \otimes d=\sum_{\mu \in M} 1 \otimes c_{\mu} \mu=\sum_{\mu \in M} c_{\mu} \otimes \mu.\end{gather*}
Comparing the coefficients in both expressions, we get that $c_{\mu}=0 $ for $\mu \in M {\setminus} \Lambda$ and $r_{\lambda}=c_{\lambda}$, for $\lambda \in \Lambda$.
Hence $d \in \mathfrak{a}$.

The inclusion $\mathfrak{b}^{ce} \subset \mathfrak{b}$ is well known. Suppose now $\mathfrak{b} {\setminus} \mathfrak{b}^{ce} \neq \varnothing$. We take a $C$-vector space basis~$\Lambda$ of $\mathfrak{b}^c$ and extend it to a basis $M$ of the $C$-vector space~$E$. Let us choose an element $a \in \mathfrak{b} {\setminus} \mathfrak{b}^{ce}$ such that its representation in the form
\begin{gather*} a = \sum_{\mu \in M} r_{\mu} \otimes \mu, \qquad r_{\mu} \in K\end{gather*}
has the smallest number of nonzero terms.

First let us consider the case when $a$ is an elementary tensor, i.e., $a=r \otimes \mu$, for $a \in K$, $\mu \in M$. If $\mu \in \Lambda$, then $a \in \mathfrak{b}^{ce}$ and we have a contradiction. So let us assume that $\mu \in M {\setminus} \Lambda$. Then we multiply $a$ by $r^{-1} \in K$ and obtain that $1 \otimes \mu \in \mathfrak{b}$ and consequently $\mu \in \mathfrak{b}^c$, hence again $a \in \mathfrak{b}^{ce}$ and we have a contradiction.

Let us assume now that the representation $a = \sum\limits_{\mu \in M} r_{\mu} \otimes \mu$, $r_{\mu} \in K,$ has at least two nonzero terms. Since $\mathfrak{b}$ is a differential ideal, then for any differential operator $\delta \in \Delta$ we have
\begin{gather*} \delta a = \sum_{\mu \in M} \delta r_{\mu} \otimes \mu \in \mathfrak{b}.\end{gather*}
Since $a \neq 0$, we can choose $\mu_0$ such that $r_{\mu_0} \neq 0$. Then $\delta r_{\mu_0}a-r_{\mu_0} \delta a \in \mathfrak{b}$ and
\begin{align*}\delta r_{\mu_0}a-r_{\mu_0} \delta a &= \delta r_{\mu_0} \bigg( \sum_{\mu \in M} r_{\mu} \otimes \mu\bigg)- r_{\mu_0} \bigg( \sum_{\mu \in M} \delta r_{\mu} \otimes \mu \bigg)\\
&= \sum_{\mu \in M,\, \mu \neq \mu_0} (\delta r_{\mu_0} r_{\mu} - r_{\mu_0} \delta r_{\mu} ) \otimes \mu.\end{align*}
since the coefficient of $\mu_0$ is equal to $(\delta r_{\mu_0})r_{\mu_0}-r_{\mu_0} (\delta r_{\mu_0})=0$. By the minimality assumption on $a \in \mathfrak{b} {\setminus} \mathfrak{b}^{ce} $, we have $\delta r_{\mu_0}a-r_{\mu_0} \delta a \in \mathfrak{b}^{ce}$, hence

\begin{gather*}\delta r_{\mu_0}a-r_{\mu_0}\delta a=\sum_{\mu \in \Lambda, \,\mu \neq \mu_0} (\delta r_{\mu_0} r_{\mu} - r_{\mu_0} \delta r_{\mu} ) \otimes \mu.\end{gather*}
We have then $\delta r_{\mu_0} r_{\mu} - r_{\mu_0} \delta r_{\mu}=0$ for $\mu \in M {\setminus} \Lambda$. Hence $\delta\big(\frac{r_{\mu}}{r_{\mu_0}}\big)=0$ for $\mu \in M {\setminus} \Lambda$. This means $\frac{r_{\mu}}{r_{\mu_0}}$ is a constant in $K$. Hence there exists $c_{\mu} \in C$ such that $r_{\mu}=c_{\mu} r_{\mu_0}$ for $\mu \in M {\setminus} \Lambda$. We obtain
\begin{gather*}a=\sum_{\mu \in M {\setminus} \Lambda} c_{\mu} r_{\mu_0} \otimes \mu+\sum_{\mu \in \Lambda} r_{\mu} \otimes \mu.\end{gather*}
Observe that $\sum\limits_{\mu \in \Lambda} r_{\mu} \otimes \mu \in \mathfrak{b}^{ce} \subset \mathfrak{b}$. Hence
\begin{gather*} \mathfrak{b} \ni \sum_{\mu \in M {\setminus} \Lambda} c_{\mu} r_{\mu_0} \otimes \mu=(r_{\mu_0} \otimes 1)
\bigg(1 \otimes \sum_{\mu \in M {\setminus} \Lambda} c_{\mu} \mu\bigg).\end{gather*}
Since $K$ is a field, we get that $1 \otimes d_0 \in \mathfrak{b}$, for $d_0=\sum\limits_{\mu \in M {\setminus} \Lambda} c_{\mu} \mu$. So $d_0 \in \mathfrak{b}^c$ and $(r_{\mu_0} \otimes 1)(1 \otimes d_0) \in \mathfrak{b}^{ce} $. And we have a contradiction with the minimality assumption on $a \in \mathfrak{b} {\setminus} \mathfrak{b}^{ce} $.
\end{proof}

\begin{Proposition}\label{injective}The map
\begin{gather*} \phi \colon \ K \otimes_C E \rightarrow K \otimes_C R, \qquad \phi (a \otimes d)=(a \otimes 1)d\end{gather*}
is injective.
\end{Proposition}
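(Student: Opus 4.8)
The plan is to identify $\ker\phi$ by means of Lemma~\ref{ideal}. Since $\phi$ is a homomorphism of differential rings, $\mathfrak{b} := \ker\phi$ is a differential ideal of $K\otimes_C E$. By Lemma~\ref{ideal} the assignments $\mathfrak{a}\mapsto\mathfrak{a}^e$ and $\mathfrak{b}\mapsto\mathfrak{b}^c$ are mutually inverse bijections between $\mathcal{I}(E)$ and $\mathcal{I}(K\otimes_C E)$; in particular every differential ideal of $K\otimes_C E$ is the extension of its own contraction, so $\mathfrak{b} = (\mathfrak{b}^c)^e = K\otimes_C h^{-1}(\mathfrak{b})$.

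It then suffices to compute the contraction $h^{-1}(\mathfrak{b}) = \{d\in E : \phi(1\otimes d)=0\}$. For $d\in E$ one has $\phi(1\otimes d)=(1\otimes 1)\,d = d$, where the product is taken in $K\otimes_C R$ and $d$ is regarded as an element of the subring $E\subset K\otimes_C R$. Hence $\phi(1\otimes d)=0$ forces $d=0$, i.e.\ the restriction of $\phi$ to $1\otimes E$ is just the canonical inclusion $E\hookrightarrow K\otimes_C R$, which is injective. Therefore $h^{-1}(\mathfrak{b})=(0)$, and substituting back we obtain $\ker\phi = K\otimes_C (0) = (0)$, so $\phi$ is injective.

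The argument is short once Lemma~\ref{ideal} is in hand; the only point requiring any care is the identification of $\phi|_{1\otimes E}$ with the inclusion $E\hookrightarrow K\otimes_C R$, which is exactly what makes the contraction of $\ker\phi$ trivial. In effect the substantive content is carried entirely by the already-established lemma, and no further genuine obstacle remains.
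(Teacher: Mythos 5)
Your proof is correct and follows essentially the same route as the paper's: both observe that $\ker\phi$ is a differential ideal of $K\otimes_C E$, invoke Lemma~\ref{ideal} to write it as $K\otimes_C\mathfrak{c}$ for an ideal $\mathfrak{c}$ of $E$, and then note that $\phi(1\otimes d)=(1\otimes 1)d=d$ forces $\mathfrak{c}=(0)$. Your explicit identification of $\phi|_{1\otimes E}$ with the inclusion $E\hookrightarrow K\otimes_C R$ is a welcome clarification but not a different argument.
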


\begin{proof} Denote $\mathfrak{b}= \ker \phi$. Using Lemma~\ref{ideal} we can assume that $\mathfrak{b} = K \otimes \mathfrak{c}$, where $\mathfrak{c} \in \mathcal{I}(E)$. Take $c \in \mathfrak{c}$. Then
\begin{gather*}0=\phi(1 \otimes c) = (1 \otimes 1)c=c.\end{gather*} So $c=0$ and $\mathfrak{b}=(0)$.
\end{proof}

We recall the notion of almost constant differential ring which will be used in the sequel.

\begin{Definition}[{\cite[Definition~5.1]{Kov2}}] Let $A$ be a differential ring and~$C_A$ its ring of constants. We say that $A$ is almost constant if the inclusion $C_A\subset A$ induces a bijection between the set of radical ideals of $C_A$ and the set of radical differential ideals of~$A$.
\end{Definition}

\begin{Proposition} Let $C$ be an algebraically closed field of characteristic zero. Let $R$ be an integral differential ring containing $C$ and let $K$ be the field of fractions of~$R$. We assume that~$C$ is the field of constants of $K$ and that $K$ is differentially finitely generated over $C$. If the differential morphism
\begin{gather*} \phi \colon \ K \otimes_C E \rightarrow K \otimes_C R, \qquad \phi (a \otimes d)=(a \otimes 1)d\end{gather*}
is an isomorphism, then the differential ring $K\otimes_C R$ is almost constant.
\end{Proposition}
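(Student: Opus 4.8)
The plan is to push everything through the isomorphism $\phi$ and reduce the statement to the behaviour of reducedness under extension of the base field.

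First I would set $A=K\otimes_C R$, so that $E$ is, by definition, the ring of constants $C_A$ of $A$. The formula defining $\phi$ gives $\phi(1\otimes d)=(1\otimes 1)d=d$ for every $d\in E$, hence $\phi\circ h$ is exactly the inclusion $E\hookrightarrow A$, where $h\colon E\to K\otimes_C E$ is the map of Lemma~\ref{ideal}. Since $\phi$ is assumed to be a differential ring isomorphism carrying $K\otimes_C E$ onto $A$ and $h(E)$ onto $C_A$, it transforms the assertion ``$A$ is almost constant'' into the equivalent assertion that $h$ induces a bijection between the set of radical ideals of $E$ and the set of radical differential ideals of $K\otimes_C E$. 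Under this translation the contraction $\mathfrak b\mapsto\mathfrak b\cap C_A$ corresponds to $\mathfrak b\mapsto h^{-1}(\mathfrak b)$, and for a radical ideal $\mathfrak a$ of $E$ the radical differential ideal it generates in $A$ corresponds to $K\otimes_C\mathfrak a$, which is automatically a differential ideal of $K\otimes_C E$ because every derivation in $\Delta$ vanishes on $\mathfrak a\subseteq E$.

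By Lemma~\ref{ideal}, the maps $\mathfrak a\mapsto\mathfrak a^e=K\otimes_C\mathfrak a$ and $\mathfrak b\mapsto\mathfrak b^c=h^{-1}(\mathfrak b)$ are already mutually inverse bijections between all ideals of $E$ and all differential ideals of $K\otimes_C E$. Consequently it only remains to check that these bijections restrict to radical ideals on the one side and radical differential ideals on the other, that is, that $\mathfrak a$ is radical in $E$ if and only if $\mathfrak a^e$ is radical in $K\otimes_C E$ (on the radical part $\mathfrak a^e$ then coincides with the radical differential ideal generated by $\mathfrak a$). Using the canonical isomorphism $(K\otimes_C E)/(K\otimes_C\mathfrak a)\cong K\otimes_C(E/\mathfrak a)$, this reduces to the statement that $E/\mathfrak a$ is reduced if and only if $K\otimes_C(E/\mathfrak a)$ is reduced. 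The implication ``$\Leftarrow$'' is formal: $C\to K$ is faithfully flat, so $E/\mathfrak a$ embeds into $K\otimes_C(E/\mathfrak a)$, and a subring of a reduced ring is reduced. The implication ``$\Rightarrow$'' is the place where the hypothesis on $C$ enters: $C$ is algebraically closed of characteristic zero, hence perfect, and over a perfect field a reduced algebra stays reduced after any extension of the base field; therefore $K\otimes_C(E/\mathfrak a)$ is reduced whenever $E/\mathfrak a$ is. Transporting the resulting bijection back along $\phi$ shows that $A=K\otimes_C R$ is almost constant.

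I expect the only genuinely non-formal ingredient to be this last base-change fact --- that over a perfect field reducedness survives an arbitrary field extension of scalars --- everything else being bookkeeping: identifying the extension/contraction pair of Lemma~\ref{ideal} with the maps appearing in the definition of almost constant, and transporting them through $\phi$. It may also be worth remarking that neither the hypothesis that $R$ be a domain nor that $K$ be differentially finitely generated over $C$ is needed for this particular statement; what is used is only that $C$ is the field of constants of $K$, which is what makes Lemma~\ref{ideal} applicable and what guarantees that $E=C_A$.
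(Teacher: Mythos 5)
Your proof is correct and follows essentially the same route as the paper: transport radical differential ideals of $K\otimes_C R$ through the isomorphism $\phi$ and then invoke Lemma~\ref{ideal} for $K\otimes_C E$. The only difference is that where the paper cites Kovacic's Proposition~3.4 to see that the extension/contraction bijection of Lemma~\ref{ideal} restricts to radical ideals on both sides, you prove this directly via the observation that a reduced algebra over the perfect field $C$ stays reduced under the base change $C\to K$ --- a legitimate, self-contained substitute for that citation.
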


\begin{proof} If $\phi$ is a differential isomorphism, there is a bijection between the set of radical differential ideals of $K\otimes_C R$ and the set of radical differential ideals of $K \otimes_C E$. By Lemma~\ref{ideal} and \cite[Proposition~3.4]{Kov2}, this last set is in bijection with the set of radical ideals of the ring of constants~$E$ of $K\otimes_C R$.
\end{proof}

\begin{Theorem} \label{mainthm} Let $C$ be an algebraically closed field of characteristic zero. Let $R$ be an integral differential ring containing $C$ and let $K$ be the field of fractions of~$R$. We assume that $C$ is the field of constants of $K$ and that $K$ is differentially finitely generated over~$C$. If the differential morphism
\begin{gather*} \phi \colon \ K \otimes_C E \rightarrow K \otimes_C R, \qquad \phi (a \otimes d)=(a \otimes 1)d\end{gather*}
is an isomorphism, then $K/C$ is a strongly normal extension.
\end{Theorem}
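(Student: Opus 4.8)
The plan is to feed the hypothesis into the theory of strongly normal extensions as developed by Kovacic~\cite{Kov1,Kov2}. The preceding proposition already records one consequence of $\phi$ being an isomorphism, namely that $K\otimes_C R$ is almost constant. The complementary consequence is purely algebraic: since $\phi(a\otimes d)=(a\otimes 1)(1\otimes d)$ and $E$ is a subring of $K\otimes_C R$, the image of $\phi$ is exactly the $K$-subalgebra of $K\otimes_C R$ generated by $1\otimes E$; hence, taking into account that $\phi$ is always injective (Proposition~\ref{injective}), the real content of the hypothesis is that $K\otimes_C R$ is generated over $K\otimes 1\cong K$ by its own ring of constants.

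Next I would translate this into the language of differential automorphisms, where the definition of strong normality lives. Fix a universal differential field extension $\mathcal U$ of $K$ with field of constants $\mathcal C\supseteq C$, and let $\sigma\colon K\to\mathcal U$ be an arbitrary differential $C$-embedding. Using the inclusion $K\hookrightarrow\mathcal U$ on the first tensor factor and $\sigma$ on the second, one gets a differential $C$-algebra homomorphism $\psi_\sigma\colon K\otimes_C R\to\mathcal U$, $a\otimes r\mapsto a\,\sigma(r)$. Being differential, $\psi_\sigma$ carries the constant subring $E$ into $\mathcal C$; and since $K\otimes_C R$ is generated over $K\otimes 1$ by $1\otimes E$, its image under $\psi_\sigma$ lies in the compositum $K\cdot\mathcal C$, so $\sigma(R)\subseteq K\cdot\mathcal C$ and therefore $\sigma(K)\subseteq K\cdot\mathcal C$. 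Applying the same to a differential $C$-automorphism of $\mathcal U$ extending $\sigma^{-1}$ gives in addition $K\subseteq\sigma(K)\cdot\mathcal C$, so that $\sigma$ is strong. Thus every differential $C$-embedding of $K$ into $\mathcal U$ is strong, and together with the standing hypotheses — $C$ algebraically closed and equal to the field of constants of $K$, and $K$ differentially finitely generated over $C$ — this is Kolchin's criterion for $K/C$ to be a strongly normal extension. Equivalently, in the framework of~\cite{Kov1,Kov2}, the finitely generated almost constant differential $K$-algebra $K\otimes_C R$, the isomorphism $\phi$, and the ideal correspondence of Lemma~\ref{ideal} together exhibit $\mathrm{Spec}\,E$ as the Galois group scheme of $K/C$.

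The step I expect to be the main obstacle is the last one: upgrading the elementary ``generated by its constants'' computation to a genuine verification of strong normality in Kovacic's sense. Kolchin's definition is stated through a universal extension and carries side conditions — that all differential $C$-embeddings be strong, that the constants not be enlarged, and that the associated Galois group really be an algebraic group, which in particular forces $\mathrm{tr.deg}_C K<\infty$ and needs $E$ to be a finitely generated $C$-algebra (a point to be settled using that $K$ is differentially finitely generated over $C$ and a coefficient comparison in the spirit of Lemma~\ref{ideal}). It is precisely here that one must invoke Kovacic's internal reformulation of strong normality in terms of finitely generated almost constant differential algebras, and use Lemma~\ref{ideal} to put the group-scheme structure on $\mathrm{Spec}\,E$. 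Granting that machinery, the argument above finishes the proof.
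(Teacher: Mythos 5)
Your argument is essentially the paper's own proof: both take an arbitrary differential embedding $\sigma$ of $K$ over $C$, extend it to $\overline{\sigma}\colon K\otimes_C R\to M$ by $a\otimes r\mapsto a\,\sigma(r)$, use the surjectivity of $\phi$ together with the fact that $E$ maps into the constants to conclude $\sigma K\subset K D_\sigma$, and then invoke Kovacic's criterion \cite[Proposition~12.5]{Kov2}. The extra machinery you flag as a possible obstacle in your last paragraph (the group-scheme structure on $\mathrm{Spec}\,E$, finite generation of $E$, and the reverse inclusion $K\subset\sigma K\cdot\mathcal{C}$) is not needed, since under the standing hypotheses that criterion requires only the single inclusion $\sigma K\subset K D_\sigma$ for every $\sigma$.
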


\begin{proof}To prove that $K/C$ is strongly normal, we shall apply \cite[Proposition 12.5]{Kov2}. Let $\sigma$ be an arbitrary $\Delta$-isomorphism of $K$ over $C$. We put $\sigma\colon K \rightarrow M$, where $M$ is any differential field extension of $K$ and denote by $D_{\sigma}$ the field of constants of $M$. Define $\overline{\sigma}\colon K \otimes_C R \rightarrow M$ by the formula $\overline{\sigma}(a \otimes b) = a \sigma(b)$. Set $\psi=\overline{\sigma} \circ \phi\colon K \otimes_C E \rightarrow M$.

Observe that $\psi(K \otimes 1) \subset K$. Indeed for $a \in K$ we have
\begin{gather*}\psi(a \otimes 1)=(\overline{\sigma} \circ \phi)(a \otimes 1)=\overline{\sigma} (a \otimes 1) =a.\end{gather*}
Because $E$ consists of constants, then $\psi(1 \otimes E) \subset D_{\sigma}$ (regardless of the choice of the differential isomorphism $\sigma$).
So \begin{gather*}\psi(K \otimes_C E)=\psi \big( (K \otimes 1)(1 \otimes E)\big) \subset K D_{\sigma}.\end{gather*}

We have then the commutative diagram
\begin{gather*}
\xymatrix{K \otimes_C E \ar[rr]^{\phi} \ar[rrdd]^{\psi}&& K \otimes_C R \ar[dd]^{\overline{\sigma}}\\&& \\
&& K D_{\sigma}. }
\end{gather*}
Because $\phi$ is surjective, $\overline{\sigma}(K\otimes_C R) \subset KD_{\sigma}$, which implies that $\sigma K \subset K D_\sigma$. We can then use \cite[Proposition 12.5]{Kov2} and conclude that $K/C$
is a strongly normal extension.
\end{proof}

\begin{Remark}Let us observe that in order to prove that $K/C$ is a Picard--Vessiot extension it would be sufficient to know that $R$ is a differentially simple ring. In this case, the fact that $K/C$ is strongly normal and $K\otimes_C R$ is almost constant imply that $K/C$ is Picard--Vessiot.
\end{Remark}

\section{Application to polynomial automorphisms}\label{section3}

Let $C$ be a field of characteristic zero and let $F=(F_1, \ldots, F_n)\colon C^n \rightarrow C^n$ be a polynomial map such that $\mathrm{det}(J_F) \in C{\setminus} \{0\}$. We can equip
$C(X_1, \ldots, X_n)$ with the Nambu derivations, i.e., derivations $\delta_1, \ldots, \delta_n$ given by
\begin{gather*}
\left(
\begin{matrix}
\delta_1 \\
\vdots \\
\delta_n%
\end{matrix}
\right) = \big(J_F^{-1}\big)^T \left(
\begin{matrix}
\frac{\partial}{\partial X_1} \\
\vdots \\
\frac{\partial}{\partial X_n}
\end{matrix}
\right).
\end{gather*}
Observe that both the field
$C(F_1, \ldots, F_n)$ and the polynomial ring $C[X_1,\dots,X_n]$ are stable under $\delta_1, \ldots, \delta_n$.

\begin{Theorem}\label{th2}Let $C$ be an algebraically closed field of characteristic zero and let $F=(F_1, \ldots, \allowbreak F_n)\colon C^n \rightarrow C^n$ be a polynomial map such that $\det(J_F) \in C{\setminus} \{0\}$. Let $R$ $($respectively~$K)$ denote the polynomial ring $C[X]$ $($respectively the rational function field~$C(X))$ with the partial differential structure given by the Nambu derivations. We extend these derivations to the tensor product $K\otimes_C R$ and denote by $E$ the ring of constants of $K\otimes_C R$. If the differential ring homomorphism
\begin{gather*} \phi \colon \ K \otimes_C E \rightarrow K \otimes_C R, \qquad \phi (a \otimes d)=(a \otimes 1)d \end{gather*}
is an isomorphism, then $F$ is invertible and its inverse is a polynomial map.
\end{Theorem}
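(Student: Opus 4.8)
The plan is to verify that the hypotheses of Theorem~\ref{mainthm} hold for $R=C[X]$ and $K=C(X)$ equipped with the Nambu derivations $\delta_1,\dots,\delta_n$, conclude from it that $K/C$ is strongly normal, then upgrade this to Picard--Vessiot by way of the Remark, and finally invoke the Crespo--Hajto theorem~\cite[Theorem~2]{CH}. The ring $R=C[X]$ is an integral domain containing $C$ and is stable under the $\delta_i$ (noted before the theorem), and $K=C(X)$ is its field of fractions; moreover $K$ is finitely generated as a field over $C$, hence a fortiori differentially finitely generated. The only hypothesis whose verification uses the Keller assumption $\det(J_F)\in C\setminus\{0\}$ is that $C$ is the field of constants of $K$: since $\det(J_F)$ is a nonzero scalar, the matrix $\big(J_F^{-1}\big)^T$ has entries in $C[X]$ and is invertible over $C[X]$, with inverse $J_F^T$; hence $\delta_1,\dots,\delta_n$ and $\partial/\partial X_1,\dots,\partial/\partial X_n$ span the same $C(X)$-module of derivations, so an element of $K$ is killed by all $\delta_i$ if and only if it is killed by all $\partial/\partial X_i$, i.e., if and only if it lies in $C$.

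With the hypotheses in place, Theorem~\ref{mainthm} applied to the isomorphism $\phi$ yields that $K/C$ is strongly normal, while the Proposition preceding Theorem~\ref{mainthm} yields that $K\otimes_C R$ is almost constant. To pass from strongly normal to Picard--Vessiot I would use the Remark, so it remains to check that $R=C[X]$ is differentially simple for the Nambu derivations. Once more because $\det(J_F)$ is a unit in $C[X]$, the differential ideals of $C[X]$ with respect to the $\delta_i$ coincide with its differential ideals with respect to the standard partial derivatives, and $C[X]$ with the standard partial derivatives is differentially simple in characteristic zero: a nonzero differential ideal contains a nonzero polynomial of minimal total degree, which must be a nonzero constant, since otherwise one of its partial derivatives would be a nonzero element of the ideal of strictly smaller degree. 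Thus $R$ is differentially simple, and the Remark gives that $K/C$ is a Picard--Vessiot extension.

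Finally, since $C(X)/C$ equipped with the Nambu derivations is a Picard--Vessiot extension, the Crespo--Hajto theorem~\cite[Theorem~2]{CH} shows that $F$ is invertible with polynomial inverse, which is the desired conclusion.

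The step I expect to be the main obstacle is the appeal to the Remark, that is, isolating within Kovacic's framework in~\cite{Kov2} the exact implication that strongly normal together with $K\otimes_C R$ almost constant and $R$ differentially simple forces $K/C$ to be Picard--Vessiot; by contrast the verifications that $C$ is the constant field and that $C[X]$ is differentially simple, though they are precisely where $\det(J_F)\in C\setminus\{0\}$ enters, are routine, and the strong normality and almost constancy come for free from the results already established.
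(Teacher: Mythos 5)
Your argument is correct in outline but follows a genuinely different route from the paper's. The paper applies Theorem~\ref{mainthm} to get that $K/C$ is strongly normal and then descends to the intermediate differential field $k=C(F_1,\dots,F_n)$: since $K/k$ is again strongly normal and, by the Keller hypothesis, the image of $F$ is Zariski open (hence dense), so that $K$ and $k$ have the same transcendence degree over $C$, the extension $K/k$ is algebraic, hence an ordinary Galois extension, and Campbell's theorem \cite{C} finishes the proof. You instead stay with $K/C$, upgrade ``strongly normal'' to ``Picard--Vessiot'' via the differential simplicity of $C[X]$ and the Remark following Theorem~\ref{mainthm}, and invoke the Crespo--Hajto theorem \cite{CH}. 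Your verification that $C$ is the constant field of $C(X)$ and that $C[X]$ is differentially simple for the Nambu derivations (both reducing to the standard partial derivatives because $\det J_F$ is a unit in $C[X]$) is correct, and is in fact more explicit than the paper, which leaves the constant-field hypothesis of Theorem~\ref{mainthm} unchecked in its own proof. The cost of your route is exactly the step you flag: the implication ``strongly normal, $K\otimes_C R$ almost constant and $R$ differentially simple imply Picard--Vessiot'' is only asserted in the paper's Remark, without proof or a precise pointer into \cite{Kov2}, so your proof is complete only modulo that assertion; the paper's detour through $C(F)$ and Campbell's theorem is precisely how it avoids having to justify the Remark. Note also where the hypothesis $\det(J_F)\in C\setminus\{0\}$ does its work in each version: in the paper it powers the transcendence-degree argument, while in yours it powers the constant-field and differential-simplicity computations.
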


\begin{proof}By Theorem~\ref{mainthm}, the differential field extension $K/C$ is a strongly normal extension. If we consider the intermediate differential field $k=C(F_1,\ldots, F_n)$, then $K/k$ is again strongly normal. Now, since $\det(J_F) \in C{\setminus} \{0\}$ and the field $C$ has characteristic zero, the image of~$F$ is a Zariski open subset of the affine space $C^n$. Hence the fields $K=C(X)$ and $k=C(F)$ have the same transcendence degree over $C$. This implies that $K/k$ is an algebraic extension, and so it is a Galois extension. Then by Campbell's theorem~\cite{C}, $F$ is invertible and its inverse is a~polynomial map.
\end{proof}

\begin{Remark}By Proposition \ref{injective}, the map $\phi$ is injective. In order to prove that it is also surjective, it is enough to prove that the elements $1\otimes X_i$, $1\leq i \leq n$ lie in the image of~$\phi$. Hence Theorem~\ref{th2} provides an effective criterion to check the invertibility of polynomial maps. Finally when we know that~$F$ has a polynomial inverse, the ring $C[X]$ is the same as $C[F]$ and therefore it is the Picard--Vessiot ring over~$C$.
\end{Remark}

\begin{Remark} The criterion given in \cite[Proposition 3.1.4(i)]{E} establishes the equivalence of the invertibility of a polynomial map and the nilpotency of the derivation $D=Y_1\delta_1+\dots +Y_n \delta_n$, where $Y_1,\dots,Y_n$ are additional variables and $\delta_1,\dots,\delta_n$ are the Nambu derivations. We have compared this criterion to our criterion in~\cite{ABCH} by applying both to the polynomial map associated to $g_1$ in \cite[Example~5.6.8]{Bot}. We have observed that with our criterion the computation of the inverse took less than one second whereas with the other criterion the computation was not ended after one hour of running the program. To implement the criterion in \cite[Proposition 3.1.4]{E} both the computation of the Nambu derivations and the powers of the derivation $D$ implies a~big number of products and is therefore rather time consuming. This criterion is quite useful for more general rings of coefficients whereas our criterion works very well in positive characteristic. For more details on it see our recent paper~\cite{ABCH2}.
\end{Remark}

\subsection*{Acknowledgments}
This paper is dedicated to the memory of Jerald Joseph Kovacic. During his visit in Barcelona in summer 2008 he discussed with us algebraic aspects of the theory of strongly normal extensions. This work was partially supported by the Faculty of Applied Mathematics AGH UST statutory tasks within subsidy of Ministry of Science and Higher Education. Crespo and Hajto acknow\-ledge support by grant MTM2015-66716-P (MINECO/FEDER,UE). We thank the anonymous referees for their valuable remarks and suggestions.

\pdfbookmark[1]{References}{ref}
\LastPageEnding

\end{document}